\theoremstyle{plain} %default
\newtheorem{theorem}{Theorem}[section]
\newtheorem{proposition}[theorem]{Proposition}
\newtheorem{corollary}[theorem]{Corollary}
\newtheorem{lemma}[theorem]{Lemma}
\theoremstyle{definition}
\newtheorem{example}[theorem]{Example}
\newtheorem{question}[theorem]{Question}
\newtheorem{remark}[theorem]{Remark}
\newtheorem{not&def}[theorem]{Notation and Definitions}
\newtheorem{notation}[theorem]{Notation}
\newtheorem{chunk}[theorem]{\hspace*{-1.065ex}\bf}
\numberwithin{equation}{theorem}
\newcommand{\lra}{\longrightarrow}
\newcommand{\xra}{\xrightarrow}
\newcommand{\fm}{\mathfrak{m}}
\newcommand{\fp}{\mathfrak{p}}
\newcommand{\sft}{\mathsf{t}}
\newcommand{\ZZ}{\mathbb{Z}}
\DeclareMathOperator{\ann}{ann}
\DeclareMathOperator{\Ass}{Ass}
\DeclareMathOperator{\depth}{depth}
\DeclareMathOperator{\Ext}{Ext}
\DeclareMathOperator{\hh}{H}
\DeclareMathOperator{\Hom}{Hom}
\DeclareMathOperator{\pd}{pd}
\DeclareMathOperator{\rank}{rank}
\DeclareMathOperator{\Spec}{Spec}
\DeclareMathOperator{\Tor}{Tor}
\newcommand{\vf}{\varphi}
\newcommand{\ul}{\underline}
\newcommand{\ov}{\overline}
\newcommand{\tensor}{\otimes}
\newcommand{\tf}[2]{{\boldsymbol\bot}_{#1}{#2}}
\newcommand{\tp}[2]{{\boldsymbol\top}_{\hskip-2pt #1}{#2}}
\def\urltilda{\kern -.15em\lower .7ex\hbox{\~{}}\kern .04em}
\def\urldot{\kern -.10em.\kern -.10em}\def\urlhttp{http\kern -.10em\lower -.1ex
\hbox{:}\kern -.12em\lower 0ex\hbox{/}\kern -.18em\lower 0ex\hbox{/}}
\begin{document}

\title[Torsion in tensor products]
{Torsion in tensor powers of modules}
\author[Celikbas, Iyengar, Piepmeyer, Wiegand]
{Olgur Celikbas, Srikanth B. Iyengar, \\ Greg Piepmeyer, and Roger Wiegand}

\address{Olgur Celikbas \\
Department of Mathematics \\
University of Missouri \\
Columbia, MO 65211,USA}
\email{celikbaso@missouri.edu}

\address{Srikanth B. Iyengar \\
Department of Mathematics \\
University of Nebraska\\
Lincoln, NE 68588, USA}
\email{s.b.iyengar@unl.edu}

\address{Greg Piepmeyer \\
Columbia Basin College \\
Pasco, WA 99301, USA}
\email{gpiepmeyer@columbiabasin.edu}

\address{Roger Wiegand\\
Department of Mathematics \\
University of Nebraska \\
Lincoln, NE 68588, USA}
\email{rwiegand1@math.unl.edu}

%%\subjclass[2000]{13D07, 13C14, 13C15}

\keywords{ Frobenius
  endomorphism, tensor product, torsion}

\thanks{Part of this material is based upon work supported by the
  National Science Foundation under Grant No. 0932078 000, while
  SBI and RW were in residence at the Mathematical Science Research
  Institute in Berkeley, California, during the Fall semester of 2013.
  SBI partly supported by NSF grant DMS-1201889 and a Simons
  Fellowship; RW partly supported by a Simons Collaboration Grant}

\date{\today}

\begin{abstract}

 Tensor products usually have nonzero torsion.  This is a central theme of Auslander's paper \cite{Au};
 the theme continues in the work of Huneke and Wiegand \cite{HW1}--\cite{HW2}. The main focus in this note is on tensor powers of a finitely generated module  over a local ring.
 Also, we study torsion-free modules $N$ with the property that $M\otimes_RN$ has nonzero torsion unless $M$ is very special. An important example of such a module $N$ is the
Frobenius power  ${}^{p^e}\!R$ over a complete intersection domain $R$ of characteristic $p>0$.
\end{abstract}

\maketitle{}

\section{Introduction}

In a 1961 paper \cite{Au}, Auslander studied torsion in tensor
products of nonzero finitely generated modules $M$ and $N$ over
unramified regular local rings $R$.  Under the assumption that
$M\otimes_RN$ is torsion-free, he proved:
\begin{enumerate}
\item $M$ and $N$ must be torsion-free, and
\item $M$ and $N$ are \emph{Tor-independent}, that is, $\Tor_i^R(M,N)=
  0$ for all $i\ge 1$.\label{pg:tor independent}
\end{enumerate}
The two conclusions are cleverly intertwined in his proof,
which we revisit in Section~\ref{sec:Frobenius} of
the present paper.    We show, over a reduced complete intersection ring $R$ of positive characteristic $p$, that
$M\otimes_R{}^{\vf^e}\!R$ is torsion-free if and only if $M$ is torsion-free and of finite projective dimension,
in which case $\Tor^R_i(M,  {}^{\vf^e}\!R) = 0$ for all $i\ge 1$.  (Here $\vf:R\to R$ is the Frobenius endomorphism and
${}^{\vf^e}\!R$ is the module obtained from $R$ by restriction of scalars along $\vf^e$.)
When $R$ is F-finite, we obtain a criterion for regularity:
$R$ is regular if and only if $({}^{\vf^e}\!M)\otimes_R{}^{\vf^e}\!R$
torsion-free for some (equivalently, every) nonzero finitely generated $R$-module $M$.

Our main results are in Section~\ref{sec:tensor-powers}, where we study torsion in tensor powers.  We obtain detailed information on annihilators of elements in $\otimes^n_RM$ and draw several conclusions.  Suppose, for example, that $\ul r = r_1,\dots,r_d$ is a regular sequence in $R$ and $M$ is the cokernel of the $d\times1$ matrix $[\ul r]^{\text{t}}$.  We show in Theorem~\ref{thm:annihilator} that $\otimes^t_RM$ is torsion-free if and only $t\le d$.  This result should be compared with Auslander's observation \cite[p. 638]{Au} that the same holds when $M$ is the $d-1^{\text{st}}$ syzygy of a module of projective dimension $d$ over a $d$-dimensional regular local ring.  Cf. also \cite[Proposition~3.1]{HW2}.
If $R$ is local, the ``only if'' direction holds much more generally:  if  we write $M$ as the cokernel of an $m\times n$  matrix $\theta$ with entries in the maximal ideal of $R$, and if some entry of $\theta$ is a non-zerodivisor, we show in Theorem~\ref{thm:tors-in-power} that $\otimes_R^tM$ has nonzero torsion for every $t\ge m$.

Throughout this paper, $R$ is a commutative, Noetherian ring.

\section{Torsion in tensor powers}
\label{sec:tensor-powers}
In this section we establish results on annihilators of elements in
tensor powers of modules.

\begin{notation}
\label{not:shuffle}
Given elements $\ul m:=m_{1},\dots,m_{d}$ in an $R$-module $M$, we
consider the element in $\tensor^d_RM$ defined by
\[
 \tau(\ul m):= \, \sum_{\sigma\in S_{d}}\mathrm{sign}(\sigma)\,
 m_{\sigma(1)}\otimes\dots\otimes m_{\sigma(d)}\,.
\]
\end{notation}

\begin{proposition}
\label{prop:tors-elt}
Let $M$ be an $R$-module. If elements $m_1,\dots,m_d$ in $M$ and
$r_1,\dots,r_d $ in $R$ satisfy
\begin{equation}
\label{eq:relation}
 r_1m_1+\dots+r_dm_d = 0,
\end{equation}
then $(r_{1},\dots,r_{d})\cdot \tau(\ul m)=0$ in $\otimes_R^dM$.
\end{proposition}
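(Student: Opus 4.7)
The plan is to establish two general properties of $\tau$ and then insert the given relation into one of its slots. First, $\tau$ is $R$-multilinear in each of the $d$ arguments $m_{1},\dots,m_{d}$: in every summand $m_{\sigma(1)}\otimes\cdots\otimes m_{\sigma(d)}$ the entry $m_{k}$ occupies exactly one tensor slot, namely $\sigma^{-1}(k)$, and the tensor product is $R$-linear in that slot. Second, $\tau$ is alternating, meaning $\tau(\ul m)=0$ whenever two of the $m_{i}$ are equal.

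For the alternating property I would argue as follows. Suppose $m_{a}=m_{b}$ with $a\ne b$, and set $\pi=(a\ b)\in S_{d}$. The assignment $\sigma\mapsto\pi\sigma$ is a fixed-point-free, sign-reversing involution of $S_{d}$, and for every $k$ one has $m_{(\pi\sigma)(k)}=m_{\sigma(k)}$: if $\sigma(k)\notin\{a,b\}$ then $\pi$ fixes $\sigma(k)$, while if $\sigma(k)\in\{a,b\}$ both sides equal the common value $m_{a}=m_{b}$. Consequently the summands indexed by $\sigma$ and by $\pi\sigma$ are identical tensors occurring with opposite signs, and they cancel in pairs.

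With these two properties in hand, fix any $i\in\{1,\dots,d\}$ and substitute the relation $\sum_{j}r_{j}m_{j}=0$ into the $i$-th slot of $\tau$. Multilinearity expands this to
\[
0 \;=\; \tau\bigl(m_{1},\dots,m_{i-1},\,r_{1}m_{1}+\cdots+r_{d}m_{d},\,m_{i+1},\dots,m_{d}\bigr) \;=\; \sum_{j=1}^{d} r_{j}\,\tau\bigl(m_{1},\dots,m_{i-1},m_{j},m_{i+1},\dots,m_{d}\bigr).
\]
For each $j\ne i$ the argument of $\tau$ on the right has $m_{j}$ in both slot $i$ and slot $j$, so by the alternating property that term vanishes; what survives is $r_{i}\,\tau(\ul m)=0$, which is the desired conclusion.

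I do not anticipate a conceptual obstacle. The one subtlety worth flagging is that the alternating property must be proved via the pairing of permutations described above, rather than by dividing by $|S_{d}|$, so that no restriction on the characteristic of $R$ is required.
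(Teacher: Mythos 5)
Your proof is correct, and it arrives at the same underlying combinatorial idea as the paper (multiply by $r_i$, substitute the relation, and kill the resulting cross-terms because two slots carry the same element) but by a genuinely different route in terms of machinery. The paper recognizes $\tau(\ul m)$ as the shuffle product $m_1\star\cdots\star m_d$ in the strictly skew-commutative algebra $\bigoplus_n\otimes_R^n M$ and invokes MacLane's Chapter X for the properties $a\star b=(-1)^{ij}b\star a$ and $a\star a=0$ in odd degree; the whole argument is then a one-line computation inside that algebra. You instead treat $\tau$ as an explicit $d$-ary map, prove its $R$-multilinearity slot by slot, and prove that it is alternating via the fixed-point-free, sign-reversing involution $\sigma\mapsto(a\,b)\sigma$ on $S_d$ — exactly the argument that makes the alternating property hold over any commutative ring, without any division by $|S_d|$ (a point you rightly flag). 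What your approach buys is self-containedness: no appeal to the shuffle-product formalism or an external reference. What the paper's approach buys is brevity and a conceptual frame (skew-commutative algebra structure on the tensor algebra), at the cost of importing a nontrivial black box. Both are complete and correct; your version is arguably more transparent for a reader who does not already know the twisted shuffle product.
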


\begin{proof}
\label{pf:shuffle}
The \emph{twisted shuffle product} gives the graded $R$-algebra
$\bigoplus_{n\ge 0}\otimes_R^nM$ a strictly skew-commutative
structure; see \cite[Chapter X, (12.4)]{MacLane}.  Strictly skew-commutative
means that for any $a\in \otimes_R^iM$ and $b\in \otimes^j_RM$, there
are equalities
\[
a\star b = (-1)^{ij}b\star a\,, \quad\text{and}\quad a\star a =
0\text{ when $i$ is odd.}
\]
By definition of the shuffle product, $ \tau(\ul m)=m_{1}\star \cdots
\star m_{d}$.  Thus for each $j$ we have
\[
\begin{split}
r_j\cdot \tau(\ul m) & = m_1 \star \cdots \star m_{j-1} \star r_j m_j \star
m_{j+1} \star \cdots \star m_n \\
& = -\sum_{i \neq j} r_i (m_1 \star \cdots \star
m_{j-1} \star m_i \star m_{j+1} \star \cdots \star m_n) = - \sum_{i \neq j}
r_i 0 = 0.  \qedhere
\end{split}
\]
\end{proof}

There is a ``universal'' source for the element $\tau(\ul m)$ in the
following sense:

\begin{remark}
 Consider the polynomial ring $\ZZ[\ul x]$ on indeterminates $\ul
 x:=x_{1},\dots,x_{d}$, and let $U$ be the $\ZZ[\ul x]$-module with
 presentation
\[
0\lra \ZZ[\ul x] \xra{\ [x_{1},\dots,x_{d}]^{\sft}\ } {\ZZ[\ul
  x]}^{d}\lra U\lra 0\,.
\]
Let $u_{1},\dots,u_{d}$ the the generators of $U$ corresponding to the
standard basis for $\ZZ[\ul x]^{d}$, so that
$x_{1}u_{1}+\cdots+x_{d}u_{d}=0$, i.e., \( \ul x \) and \( \ul u \)
satisfy \eqref{eq:relation}.  Then $\ann_{\ZZ[\ul x]}\tau(\ul
u)\supseteq (\ul x)$ by Proposition~\ref{prop:tors-elt}; we shall see, in
Theorem~\ref{thm:annihilator} below, that in fact $\ann_{\ZZ[\ul x]}\tau(\ul
u) = (\ul x)$.

Given any $R$-module $M$ with a syzygy relation \eqref{eq:relation},
consider the ring homomorphism $\ZZ[\ul x] \to R$ taking
$x_{i}$ to $r_{i}$, for each $i$, and extending the structure
homomorphism $\ZZ\to R$. The hypothesis on $M$ implies that there is a
homomorphism of $\ZZ[\ul x]$-modules
\[
f\colon U \lra M\quad\text{with $f(u_{i})=m_{i}$ for $i=1,\dots,d$.}
\]
Under the induced map $\tensor^df\colon \tensor^d_{\ZZ[\ul x]}U\to
\tensor^d_RM$, the element $\tau(\ul u)$ maps to $\tau(\ul m)$.
 \end{remark}

This remark prompts the discussion below, culminating in Theorem~\ref{thm:annihilator}.
First we review some notions regarding depth. For details, see \cite[Chapter 1]{BH}.

\begin{chunk} \label{depthchunk}
  \textbf{Depth.}  Let $M$ be a finitely generated $R$-module and $I$ an
  ideal of $R$ satisfying $IM\ne M$. The \emph{$I$-depth} of
  $M$ is the number
\begin{equation*}
\label{eq:depth}
\depth_{R}(I,M)= \inf\{n\geq 0\mid \Ext_{R}^{n}(R/I,M)\ne 0\}\,.
\end{equation*}
The $I$-depth of $M$ is always finite and is equal to the length of every maximal
 $M$-regular sequence in $I$.

If $\ul x:=x_{1},\dots,x_{d}$ is a sequence of elements in $R$, and
$K$ is the Koszul complex on $\ul x$, then the $(\ul x)$-depth of $M$
may be computed from its Koszul homology:
\[
\depth_{R}((\ul x),M)
= d - \sup\{i\geq 0\mid \hh_{i}(K
\otimes_{R}M)\ne 0\}
\]
This is the \emph{depth sensitivity} of the Koszul complex.

Suppose now that $\ul x$ is $R$-regular. Then $K$ is a free resolution of $R/(\ul x)$,
and hence $\hh_{*}(K\otimes_{R}M) \cong \Tor^R_*(R/(\ul x),M)$.  In this case, we have
\begin{equation}\label{eq:depth-Tor}
\depth_{R}((\ul x),M) = d - \sup\{i\geq 0\mid \Tor^{R}_{i}(R/(\ul x),M)
 \ne 0\}\,.
\end{equation}

If $R$ is local  with maximal ideal $\fm$,
we write $\depth_{R}M$ for the $\fm$-depth of $M$ and call it the
\emph{depth} of $M$.
\end{chunk}

\begin{chunk}{\bf A Koszul syzygy module.}
\label{Kos-module}
Let $R$ be a Noetherian ring and $\ul r:=r_{1},\dots,r_{d}$ a regular
sequence in $R$ with $(\ul r)\ne R$.  Consider the complex
\[
F:=\quad 0\lra R\xra{\ [r_{1},\dots,r_{d}]^{\sft}\ } R^{d}\lra 0
\]
concentrated in degrees $0$ and $1$. Set $M=\hh_{0}(F)$; as $r_1$ is a
non-zerodivisor, $F$ is a free resolution of $M$.
\end{chunk}

\begin{lemma}
\label{lem:Koszul-torsion}
Let $M$, $d$, and $F$ be as in \rm{2.5}.  For each $n=1,\dots, d$, the following statements hold:
\begin{enumerate}[{\quad\rm(1)}]
\item $M$ and $\tensor_R^{n-1}M$ are $\Tor$-independent.
\item $\tensor^n_RF$ is a free resolution of $\tensor_R^nM$, and
  $\pd_{R}(\tensor_R^nM) = n$.
\end{enumerate}
\end{lemma}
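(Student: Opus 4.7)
The natural approach is induction on $n \le d$, with statements (1) and (2) interlocking so that (2) at stage $n-1$ drives the Tor-vanishing in (1) at stage $n$, which in turn promotes (2) at stage $n$ via a K\"unneth-style argument. The base case $n=1$ is immediate: (1) is vacuous since $\otimes^0_R M = R$, and for (2) one need only note that $\pd_R M = 1$; indeed any splitting of $R^d \onto M$ would yield an equation $1 \in (\ul r)$, contradicting $(\ul r) \ne R$.

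For the inductive step, set $N := \otimes^{n-1}_R M$; by (2) at stage $n-1$, the complex $\otimes^{n-1}_R F$ is a free resolution of $N$ of length $n-1$. Since $\pd_R M = 1$, Tor-independence of $M$ and $N$ reduces to the single vanishing $\Tor^R_1(M, N) = 0$. Resolving $M$ by $F$ identifies this Tor with the submodule $0 :_N (\ul r)$ of $N$. I would then compute $0 :_N (\ul r)$ as a top Koszul homology: because $\ul r$ is $R$-regular, the Koszul complex $K$ on $\ul r$ is a free resolution of $R/(\ul r)$ whose top differential is $[\ul r]^{\sft}$ up to signs, so $0 :_N (\ul r) = \hh_d(K \otimes_R N) = \Tor^R_d(R/(\ul r), N)$. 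This Tor now vanishes because the same group can equally be computed from the inductive resolution $\otimes^{n-1}_R F$ of $N$, whose length $n-1 < d$ forces $\hh_d = 0$.

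With (1) established at $n$, the tensor product $\otimes^n_R F = F \otimes_R (\otimes^{n-1}_R F)$ of two free resolutions of Tor-independent modules is itself a free resolution of $M \otimes_R N = \otimes^n_R M$, giving $\pd_R(\otimes^n_R M) \le n$. For the reverse inequality, I would compute $\Tor^R_n(R/(\ul r), \otimes^n_R M)$ from this resolution: every entry of $[\ul r]^{\sft}$ dies modulo $(\ul r)$, so the complex $(\otimes^n_R F) \otimes_R R/(\ul r)$ has zero differential, and its degree-$n$ homology is then just its top term $R \otimes_R R/(\ul r) = R/(\ul r) \ne 0$. Hence $\pd_R(\otimes^n_R M) = n$.

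The only step I would verify with care is the dual computation of $\Tor^R_d(R/(\ul r), N)$---once via the Koszul resolution of $R/(\ul r)$, to identify it with $0 :_N (\ul r)$, and once via the inductive resolution $\otimes^{n-1}_R F$ of $N$, to kill it for the length reason $n-1 < d$. This bridge between (2) at stage $n-1$ and (1) at stage $n$ is precisely what makes the interlocking induction close; the rest is bookkeeping.
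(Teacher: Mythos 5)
Your proposal is correct and follows the same interlocking induction the paper uses: the inductive hypothesis (2) at stage $n-1$ feeds the Tor-vanishing in (1) at stage $n$, which then upgrades to (2) at stage $n$ via the tensor product of resolutions of Tor-independent modules; and the lower bound $\pd_R(\otimes^n_R M) \ge n$ comes from the same observation that $(\otimes^n_R F)\otimes_R R/(\ul r)$ has zero differential, so $\Tor^R_n(R/(\ul r),\otimes^n_R M)\cong R/(\ul r)\neq 0$.

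The one place you diverge is the proof of (1). The paper computes $\Tor^R_*(R/(\ul r),\otimes^{n-1}_R M)$ from the inductive resolution, reads off that the top nonvanishing degree is $n-1$, invokes depth sensitivity of the Koszul complex to conclude $\depth_R((\ul r),\otimes^{n-1}_R M)=d-(n-1)\ge 1$, and then uses the resulting non-zerodivisor to kill $\Tor^R_1(M,\otimes^{n-1}_R M)$. You instead note directly that $\Tor^R_1(M,N)$ and $\Tor^R_d(R/(\ul r),N)$ both compute the same submodule $0:_N(\ul r)$ --- the first via the resolution $F$ of $M$, the second via the Koszul resolution $K$ of $R/(\ul r)$, whose top differential is $[\ul r]^{\sft}$ --- and then kill $\Tor^R_d(R/(\ul r),N)$ for the bare length reason $\pd_R N=n-1<d$. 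This is logically equivalent (depth $\ge 1$ is precisely $\hh_d(K\otimes_R N)=0$), but your version avoids introducing depth altogether and makes the role of the inductive hypothesis more transparent: you use only that the resolution of $N$ is too short to support a degree-$d$ Tor. It is a slightly more self-contained packaging of the same underlying computation.
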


\begin{proof}
The base case is $n=1$, and then (1) and (2) are clear.  Fix an
integer $n$ with $2\le n \le d$, and assume these statements hold for
all integers $\le n-1$. Set $I=(\ul r)$.  Since $\tensor^{n-1}_{R}F$ is a free resolution of
$\tensor_{R}^{n-1}M$, we have
\[
\Tor^{R}_{*}(R/I,\tensor_R^{n-1}M) =
\hh_{*}((R/I)\otimes_{R}(\tensor_R^{n-1}F)) \cong
(\tensor_R^{n-1}{((R/I)\otimes_{R}F)})_{*}\,,
\]
where the last isomorphism holds because the complex in question has zero differential. In particular, $\Tor^{R}_{n-1}(R/I,\tensor_R^{n-1}M)\cong R/I \ne 0$, so that
\begin{equation}
\label{eq:last-tor}
\sup\{i\geq 0\mid \Tor^{R}_{i}(R/(\ul r),\tensor_{R}^{n-1}M)\ne 0\}=n-1\,.
\end{equation}
We can now complete the induction step.

(1) The induction hypothesis implies that $\tensor^{n-1}_{R}F$ is a free
resolution of $\tensor_{R}^{n-1}M$, so \eqref{eq:last-tor} and
\eqref{eq:depth-Tor} show that
\begin{equation}\label{eq:pos-depth}
\depth_R(I, \otimes_R^{n-1} M) = d - (n-1) \ge 1\,.
\end{equation}
Moreover, $\Tor^{R}_{*}(M,\tensor_R^{n-1}M)$ is the homology of
the complex
\[
F\otimes_R(\otimes_{R}^{n-1}M): \qquad 0\lra \tensor_R^{n-1}M
\xra{\ [\ul r]^{\sft} } (\tensor_R^{n-1}M)^{d}\lra 0
\]
(concentrated in degrees $0$ and $1$).
By \eqref{eq:pos-depth}, some $r_i$ is a non-zerodivisor on $\otimes_R^{n-1}M$,
and it follows that $M$ and $\tensor_R^{n-1}M$ are $\Tor$-independent.

\medskip

(2) By hypothesis, $F$ and $\tensor_R^{n-1}F$ are free resolutions of
$M$ and $\tensor_R^{n-1}M$, respectively. We have already proved, in
(1), that these modules are $\Tor$-independent, so the complex
$F\otimes_{R}(\tensor_R^{n-1}F)$, that is to say, $\tensor_R^nF$, is a
free resolution of $\tensor_R^nM$. In particular,
$\pd_{R}(\tensor_R^nM) \leq n$; that equality holds follows from
\eqref{eq:last-tor}.
\end{proof}

\begin{chunk} \label{tor-sub}
  \textbf{Torsion submodule.}  Let $\text{Q}(R)$ be the total quotient
  ring of $R$.  The \emph{torsion submodule} $\tp R M$ of $M$ is the
  kernel of the natural homomorphism $M\to \text{Q}(R)\otimes_RM$.
  The inclusion $\tp RM\subseteq M$ gives rise to an exact sequence
\begin{equation}
\label{eq:tp}
0\lra \tp RM \lra M\lra \tf RM\lra 0\,.
\end{equation}
The module $M$ is \emph{torsion} if $\tp RM=M$ (that is, $M_\fp = 0$
for each $\fp\in \Ass(R)$), and $M$ is \emph{torsion-free} if $\tp RM
= 0$.  Thus $M$ is torsion-free if and only if $\bigcup\Ass M \subseteq \bigcup \Ass R$.  The
stronger condition, that
$\Ass M \subseteq \Ass R$, is therefore a sufficient condition for $M$ to be torsion-free.  We will
invoke this criterion twice in the proof of the next theorem.
\end{chunk}

Part (1) of the next result is reminiscent of Auslander's discussion on p. 638 of \cite{Au}.
Cf. also \cite[Proposition 3.1]{HW2}.

\begin{theorem}
\label{thm:annihilator}
Let $M$ and $\ul r$ be as in \emph{\ref{Kos-module}}.  The following
statements hold:
\begin{enumerate}[{\quad\rm(1)}]
\item
$\tensor_{R}^{n}M$ is torsion-free if and only if $n\le d-1$.
\item
The element $\tau(\ul m)$ in $\tensor_{R}^{d}M$ satisfies
$\ann_{R}\tau(\ul m)= (\ul r)$.
\item
The map $R/(\ul r) \to \tensor_{R}^{d}M$ of $R$-modules with $1\mapsto
\tau(\ul m)$ induces a splitting
\[
\tensor_{R}^{d}M\cong (R/(\ul r)) \bigoplus W
\]
where $W$ is torsion-free; in particular, we have
\[
 \Hom_{R}(R/(\ul r), \tensor_R^dM)=
 R \tau(\ul m)\ne 0\,.
\]
\end{enumerate}
\end{theorem}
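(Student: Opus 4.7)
The overall plan is to establish (2) and (3) jointly via an explicit retraction and a Koszul socle identification, and then to extract (1) from the associated-prime analysis set up by the localization arguments.

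I would define a retraction $\rho\colon\tensor^d_R M\to R/(\ul r)$ on generators by letting $\rho(m_{i_1}\otimes\cdots\otimes m_{i_d})$ be $1+(\ul r)$ when $(i_1,\dots,i_d)=(1,2,\dots,d)$ and $0$ otherwise. Applying $\rho$ to a defining relation $\sum_{k=1}^d r_k(m_{i_1}\otimes\cdots\otimes m_k\otimes\cdots\otimes m_{i_d})$ (with $m_k$ in position $j$) kills all but at most one term, leaving a multiple of some $r_j\in(\ul r)$, which vanishes in $R/(\ul r)$; so $\rho$ descends to $\tensor^d_R M$. Directly, $\rho(\tau(\ul m))=\sum_{\sigma\in S_d}\mathrm{sign}(\sigma)\,\delta_{\sigma=\mathrm{id}}=1$. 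Hence $\rho$ retracts the map $\iota\colon R/(\ul r)\to\tensor^d_R M$, $1\mapsto\tau(\ul m)$; combined with Proposition~\ref{prop:tors-elt} this gives $\ann_R\tau(\ul m)=(\ul r)$, proving (2), and furnishes the splitting $\tensor^d_R M\cong R/(\ul r)\oplus W$ with $W=\ker\rho$. For the Hom equality, the top Koszul differential $K_d\otimes_R N\to K_{d-1}\otimes_R N$ sends $n\mapsto(\pm r_in)_i$, so $(0:_N(\ul r))\cong\Tor^R_d(R/(\ul r),N)$ for every $R$-module $N$. Since $\tensor^d_R F$ is a free resolution of $\tensor^d_R M$ by Lemma~\ref{lem:Koszul-torsion}, the same computation as in the proof of that lemma gives $\Tor^R_*(R/(\ul r),\tensor^d_R M)\cong\tensor^d_R\bigl(R/(\ul r)\otimes_R F\bigr)$ with vanishing differential, whose top summand is $(R/(\ul r))^{\otimes d}=R/(\ul r)$. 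Thus $(0:_{\tensor^d M}(\ul r))$ is cyclic isomorphic to $R/(\ul r)$; since $\rho(\tau(\ul m))=1$ is a generator of $R/(\ul r)$, the element $\tau(\ul m)$ generates this socle, yielding $\Hom_R(R/(\ul r),\tensor^d_R M)=R\tau(\ul m)$.

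To see $W$ is torsion-free and to prove (1), I would bound $\Ass(\tensor^n_R M)$ by localizing at a prime $\fp$. If $(\ul r)\not\subseteq\fp$, some $r_i$ is invertible in $R_\fp$, so $M_\fp\cong R_\fp^{\,d-1}$ and $(\tensor^n_R M)_\fp$ are free; hence $\fp\in\Ass R$. If $(\ul r)\subseteq\fp$ and $n\le d-1$, then $\depth_R((\ul r),\tensor^n_R M)=d-n\ge1$ (from the proof of Lemma~\ref{lem:Koszul-torsion}) rules out $\fp\in\Ass(\tensor^n_R M)$, so no such $\fp$ exists and $\tensor^n_R M$ is torsion-free. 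If $(\ul r)\subseteq\fp$ and $n=d$, Auslander--Buchsbaum with $\pd_{R_\fp}(\tensor^d_R M)_\fp=d$ (the localized resolution still has nonzero top Tor $R_\fp/(\ul r)$) forces $\depth R_\fp=d$; consequently $\depth R_\fp/(\ul r)=0$ and $\fp\in\Ass R/(\ul r)$. So $\Ass(\tensor^d_R M)\subseteq\Ass R\cup\Ass R/(\ul r)$. Any $\fp\in\Ass W$ containing $(\ul r)$ would produce $0\ne w\in(0:_W(\ul r))\subseteq(0:_{\tensor^d M}(\ul r))\cap W=R\tau(\ul m)\cap W=0$, a contradiction; thus $\Ass W\subseteq\Ass R$ and $W$ is torsion-free. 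Finally, $\tensor^d_R M$ is not torsion-free since $R\tau(\ul m)\cong R/(\ul r)$ is a nonzero torsion submodule whose annihilator contains the non-zero-divisor $r_1$.

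The main obstacle, in my view, is certifying that $\tau(\ul m)$ generates the cyclic socle $(0:_{\tensor^d M}(\ul r))$ rather than merely embedding into it. The retraction $\rho$ is tailored precisely so that $\rho(\tau(\ul m))=1$ is a unit of $R/(\ul r)$, which is what distinguishes $\tau(\ul m)$ as a generator of the socle and, simultaneously, pins down $\ann_R\tau(\ul m)=(\ul r)$.
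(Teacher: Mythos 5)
Your proof is correct, and it diverges from the paper's argument at two substantive points; it's worth noting both the agreements and the differences.

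Where you agree with the paper: the ``if'' half of (1) runs the same way in both proofs --- at a prime $\fp \not\supseteq (\ul r)$ the module is free, and at a prime $\fp \supseteq (\ul r)$ a depth count (you use $\depth_R((\ul r),\tensor^n M)=d-n\ge 1$, the paper uses Auslander--Buchsbaum at $\fp$) rules out $\fp\in\Ass(\tensor^n M)$.

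Where you differ, and what each route buys. \emph{The splitting.} You build the retraction $\rho$ directly on $\tensor^d F_0$ by reading off the coefficient of $e_1\otimes\cdots\otimes e_d$, checking it kills the defining relations of $\tensor^d M$ because the image lands in $(\ul r)$. The paper instead passes through $\tensor^d M\onto (\tensor^d M)\otimes R/(\ul r)\cong\tensor^d(F_0\otimes R/(\ul r))$ and observes that $\tau(\ul e')$ is unimodular, hence extends to a basis. Both are retractions onto $R\tau(\ul m)$; yours is more explicit and avoids the basis-extension step, at the cost of the well-definedness check you carry out. \emph{Torsion-freeness of $W$ and the Hom identity.} You go via the Koszul identification $(0:_N(\ul r))\cong\Tor^R_d(R/(\ul r),N)$, compute $\Tor^R_d(R/(\ul r),\tensor^d M)\cong R/(\ul r)$ from the zero-differential complex $\tensor^d(R/(\ul r)\otimes F)$, conclude the $(\ul r)$-socle of $\tensor^d M$ is cyclic, and then use $\rho(\tau(\ul m))=1$ (a unit) to see $\tau(\ul m)$ generates it; from there $W$ torsion-free at primes over $(\ul r)$ follows since $(0:_W(\ul r))\subseteq R\tau(\ul m)\cap W=0$. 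The paper gets torsion-freeness of $W$ by a rank comparison of minimal free resolutions after localizing (the Koszul resolution of $R_\fp/(\ul r)$ is a summand of $(\tensor^d F)_\fp$ with matching top rank $1$, giving $\pd_{R_\fp}W_\fp\le d-1$ and $\depth W_\fp\ge 1$), and then treats the Hom identity as a formal consequence of $W$ being torsion-free. Your socle/Tor detour makes the Hom statement a structural computation in its own right; the paper's route is more self-contained once $W$ is known to be torsion-free.

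One small point you should make explicit: your last sentence only exhibits torsion in $\tensor^d M$. For $n>d$ you still need to observe that $\tensor^n M\cong(\tensor^d M)\otimes(\tensor^{n-d}M)$ contains the nonzero torsion summand $R/(\ul r)\otimes(\tensor^{n-d}M)$, which is nonzero because $(R/(\ul r))\otimes M\cong(R/(\ul r))^d$, and is annihilated by the non-zerodivisor $r_1$. The paper is equally terse here, but the sentence is worth adding.
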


\begin{proof}
Set $I=(\ul r)$, let $n\le d-1$,  and fix a prime $\fp\in\Ass(\otimes^n_RM)$. If $I \subseteq \fp$, it follows from Lemma~\ref{lem:Koszul-torsion} that
$(\otimes_{R}^{n}F)_{\fp}$ is a minimal free resolution of $(\tensor_R^{n}M)_{\fp}$; therefore
\[
\depth_{R_{\fp}}(\tensor_{R}^{n} M)_{\fp} = \depth R_{\fp} - n \geq d
- n \geq 1,
\]
contradiction.
Thus $I \not\subseteq \fp$ and then the $R_{\fp}$-module $M_{\fp}$ is a nonzero free module; hence so is
$(\tensor_R^{n}M)_{\fp}$. Therefore $\depth R_\fp = \depth_{R_\fp}(\otimes^n_RM)_\fp$.
We have shown that $\Ass(\otimes^n_RM) \subseteq \Ass R$, and hence that $\otimes_R^nM$ is torsion-free.
The ``only if'' direction of (1) will follow from (3).

\medskip

As for parts (2) and (3), by construction $r_{1}m_{1}+\cdots +
r_{d}m_{d}=0$, so Proposition~\ref{prop:tors-elt} gives an inclusion
$I \subseteq \ann_{R}\tau(\ul m)$.  The reverse inclusion will follow,
once we ascertain that the map in (3) splits. Consider the
homomorphisms of $R$-modules
\[
\tensor_{R}^{d}(F_{0}) \twoheadrightarrow \tensor_{R}^{d}M
\twoheadrightarrow (\tensor_{R}^{d}M)\otimes_{R}R/I \cong
\hh_{0}((\tensor_{R}^{d}F)\otimes_{R} R/I) =
\tensor_{R}^{d}(F_{0}\otimes_{R}R/I) \,,
\]
where the surjections are the natural ones; the isomorphism holds
because $\tensor_{R}^{d}F$ is a free resolution of $\tensor_{R}^{d}M$,
and the equality holds because the differential on $F$ has its
image in  $I F$. Let $\ul e = e_1,\dots,e_d $ be the
standard basis for $F_{0}=R^{d}$, in \ref{Kos-module}, and let ${\ul
  e}'$ be the induced basis of the free $R/I$-module
$F_{0}\otimes_{R}R/I$. Under the composite map, the element $\tau(\ul
e)$ maps to $\tau(\ul e')$, and $\{\tau(\ul e')\}$ extends to a basis of the
$R/I$-module $\tensor_{R}^{d}(F_{0}\otimes_{R}(R/I))$. Since $\tau(\ul
e)$ maps to $\tau(\ul m)$ in $\tensor_{R}^{d}M$, the map in (2) splits
and gives a decomposition
\[
\tensor_{R}^{d} M\cong (R/I) \bigoplus W\,.
\]
It remains to verify that $W$ is torsion-free; given the decomposition
above, the other parts of (3) are a consequence of this fact.

For $\fp\in\Spec R$ with $I\not\subseteq \fp$, the
$R_{\fp}$-module $M_{\fp}$ free, and hence so is $W_{\fp}$.
Assume now that $I\subseteq \fp$. The Koszul complex on $\ul r$, viewed
as elements in $R_{\fp}$, is a minimal resolution of $(R/I)_{\fp}$,
and so it is a direct summand of $(\otimes_{R}^{n}F)_{\fp}$, the
minimal free resolution of $(\tensor_R^{n}M)_{\fp}$. The ranks of the
free modules in the top degree, $d$, of these complexes coincide (and
equal $1$), whence $\pd_{R_{\fp}}W_{\fp}\le d-1$ and
\[
\depth_{R_{\fp}}W_{\fp} = \depth R_{\fp} - \pd_{R_{\fp}}W_{\fp} \ge 1
\]
These observations show that $\Ass W \subseteq \Ass R$, so
$W$ is torsion-free as claimed.
 \end{proof}

\subsection*{Local rings}
Next we focus on local rings, where the preceding results can be
strengthened to some extent.

\begin{lemma}
\label{lem:tau-not0}
Let $M$ be a finitely generated module over a local ring $(R,\fm)$,
and let $m_1,\dots,m_d\in M$.  If the images of $\{m_1,\dots,m_d\}$ in
$M/\fm M$ are linearly independent, then $\tau(\ul m)$ is not in $\fm
(\tensor_R^dM)$.
\end{lemma}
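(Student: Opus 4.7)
The plan is to reduce modulo the maximal ideal and argue in the residue-field case, where linear independence makes the conclusion transparent.

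First, set $k := R/\fm$. The standard base-change identification gives an isomorphism of $k$-vector spaces
\[
(\tensor_R^dM)\tensor_R k \ \iso \ \tensor_k^d(M/\fm M),
\]
induced by the obvious map sending $m_{1}\tensor\cdots\tensor m_{d}$ to $\bar m_{1}\tensor\cdots\tensor\bar m_{d}$. The image of $\tau(\ul m)$ under this isomorphism is $\tau(\bar{\ul m})$, computed now in $\tensor_k^d(M/\fm M)$. If $\tau(\ul m)$ lay in $\fm(\tensor_R^dM)$, then its image in $(\tensor_R^dM)\tensor_R k$ would be zero, and hence $\tau(\bar{\ul m})=0$ in $\tensor_k^d(M/\fm M)$. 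So it suffices to show that $\tau(\bar{\ul m})\ne 0$.

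Now use the hypothesis: since $\bar m_{1},\dots,\bar m_{d}$ are $k$-linearly independent in $M/\fm M$, we may extend them to a $k$-basis $\{\bar m_{i}\}_{i\in I}$ of $M/\fm M$, with $\{1,\dots,d\}\subseteq I$. The set of elementary tensors $\bar m_{i_{1}}\tensor\cdots\tensor\bar m_{i_{d}}$ with $(i_{1},\dots,i_{d})\in I^{d}$ is then a $k$-basis of $\tensor_k^d(M/\fm M)$. The expansion
\[
\tau(\bar{\ul m}) \ = \ \sum_{\sigma\in S_{d}}\mathrm{sign}(\sigma)\,\bar m_{\sigma(1)}\tensor\cdots\tensor\bar m_{\sigma(d)}
\]
is a sum of $d!$ \emph{distinct} basis vectors (indexed by the $d!$ permutations of $(1,\dots,d)\in I^{d}$), each with coefficient $\pm 1\ne 0$ in $k$. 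Hence $\tau(\bar{\ul m})\ne 0$, completing the proof.

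No real obstacle arises: the antisymmetrization map to $\Wedge^{d}$ would lose information in characteristics dividing $d!$, but we never pass to it. Working directly in the tensor power and exploiting the distinctness of the monomials $\bar m_{\sigma(1)}\tensor\cdots\tensor\bar m_{\sigma(d)}$ sidesteps the characteristic issue entirely.
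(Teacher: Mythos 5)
Your proof is correct and follows the same route as the paper's: reduce modulo $\fm$ via the base-change isomorphism $(\tensor_R^dM)\tensor_R k \iso \tensor_k^d(M/\fm M)$ and check that $\tau(\bar{\ul m})\neq 0$ over the residue field. The paper states the key nonvanishing of $\tau(\ul m')$ without elaboration; you have simply supplied the missing detail (extending to a basis and observing the $d!$ monomials are distinct basis vectors), which is a welcome clarification rather than a different argument.
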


 \begin{proof}
Let \( m_i'\) be the image of \( m_i \) in the \( k\)-vector space
\(M/\fm M \).  Since $\{m_1',\dots,m_d'\}$ is linearly independent, \(
\tau(\ul{m}') \neq 0 \).  Hence \( \tau(\ul{m}) \notin \fm
(\tensor^d_R M) \).  \end{proof}

Given an $R$-module $M$, we write $I(M)$ for the ideal $(r_{ij})$
defined by the entries in a matrix in some minimal presentation
\[
R^{\mu}\xra{\ [r_{ij}]\ }R^{\nu}\lra M\lra 0\quad\text{where
  $\nu=\nu_{R}(M)$.}
\]
This ideal is independent of the presentation. Moreover,
 $I(M)$ contains a non-zerodivisor if and only if,
over $\text{Q}(R)$, the total quotient ring of $R$, the module
$\text{Q}(R)\otimes_{R}M$ can be generated by fewer than $\nu$
elements.  To see this we note that, since $\text{Q}(R)$ is semilocal, the module
$\text{Q}(R)\otimes_{R}M$ {\em needs} $\nu$ generators if and only if
$\nu_{R_\fp}M_\fp = \nu$ for some $\fp\in \Ass R$; moreover, $\nu_{R_\fp}M_\fp = \nu$
if and only if the presentation remains minimal when localized at $\fp$, that is, if and only if
 $I(M) \subseteq \fp$.  Thus
$\text{Q}(R)\otimes_{R}M$ needs $\nu$ generators if and only if $I(M) \subseteq \fp$
for some $\fp\in \Ass R$,
that is, if and only if $I(M)$ consists of zerodivisors.

 Recall that $M$ is said to {\em have rank} \label{rk page}
$r$ if $\text{Q}(R)\otimes_{R}M$ is free over $\text{Q}(R)$ of rank
$r$; see \cite[Proposition 1.4.3]{BH} for different characterizations
of this property.

\begin{theorem}
\label{thm:tors-in-power}
Let $R$ be a local ring and $M$ a nonzero finitely generated
$R$-module satisfying one of the following conditions:
\begin{enumerate}[\quad\rm(1)]
\item $I(M)$ contains a non-zerodivisor; in this case, set
  $b=\nu_{R}(M)$; or
\item $M$ has rank; in this case, set $b=\rank_{R}(M)+1$.
\end{enumerate}
If $M$ is not free, then for each nonzero finitely generated
$R$-module $N$ one has
\[
\tp R{((\tensor_R^nM)\otimes_{R}N)}\ne 0 \quad\text{for each $n\geq
  b$}\,.
\]
\end{theorem}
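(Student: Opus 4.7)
The plan is to exhibit, for every $n\ge b$, a nonzero element of $(\tensor_R^n M)\tensor_R N$ annihilated by a non-zerodivisor of $R$. First reduce to $n=b$ by applying the statement with $N$ replaced by $N':=(\tensor_R^{n-b}M)\tensor_R N$, which is nonzero (its mod-$\fm$ reduction is a tensor product of nonzero $\kappa$-vector spaces). Next observe that $\nu_R(M)\ge b$ in each case: trivially in case~(1); in case~(2), if $\nu_R(M)\le r$, then a surjection $R^r\onto M$ would become an isomorphism after tensoring with $\text{Q}(R)$ (a surjective endomorphism of a finitely generated module is an isomorphism), so its kernel would be torsion inside the torsion-free module $R^r$, hence zero, contradicting that $M$ is not free. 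Fix $m_1,\dots,m_b$ from a minimal generating set of $M$ and $n_0\in N\setminus \fm N$; the candidate element is $x:=\tau(\ul m)\tensor n_0\in(\tensor_R^bM)\tensor_R N$.

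The nonvanishing of $x$ is immediate from Lemma~\ref{lem:tau-not0}: the images of $m_1,\dots,m_b$ in $M/\fm M$ are $\kappa$-linearly independent, so $\tau(\ul m)\notin \fm(\tensor_R^bM)$; combined with $\overline{n_0}\ne 0$ in $N/\fm N$, the image of $x$ in the $\kappa$-vector space $(\tensor_R^bM/\fm)\tensor_\kappa(N/\fm N)$ is a nonzero pure tensor. It remains to produce a non-zerodivisor $s\in R$ with $s\tau(\ul m)=0$; then $sx=0$ and we are done.

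In case~(1), Proposition~\ref{prop:tors-elt} applied to each column of the minimal presentation matrix $[r_{ij}]\colon R^{\mu}\to R^{\nu}$ shows that every entry $r_{ij}$ annihilates $\tau(\ul m)$, so the whole ideal $I(M)$ annihilates $\tau(\ul m)$; by hypothesis $I(M)$ contains a non-zerodivisor. In case~(2), the key point is that $\tau\colon M^{r+1}\to \tensor_R^{r+1}M$ is \emph{alternating}: the strict skew-commutativity of the shuffle product ($v\star v=0$ for $v$ of odd degree, the same ingredient used in the proof of Proposition~\ref{prop:tors-elt}) forces $\tau$ to vanish whenever two entries coincide. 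Hence $\tau$ factors through the exterior power $\Lambda^{r+1}_RM$. Since exterior powers commute with the flat base change $R\to \text{Q}(R)$, and $M$ has rank $r$,
\[
\text{Q}(R)\tensor_R \Lambda^{r+1}_R M\iso \Lambda^{r+1}_{\text{Q}(R)}(\text{Q}(R)\tensor_RM)\iso \Lambda^{r+1}_{\text{Q}(R)}\text{Q}(R)^r=0.
\]
Thus the image of $\tau(\ul m)$ in $\text{Q}(R)\tensor_R\tensor_R^{r+1}M$ vanishes, i.e., $\tau(\ul m)\in \tp{R}{(\tensor_R^{r+1}M)}$, so by definition of torsion some non-zerodivisor of $R$ kills it.

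The crux of the proof is the alternating argument in case~(2): once one notices that the strict skew-commutativity already invoked in the proof of Proposition~\ref{prop:tors-elt} forces $\tau$ to factor through the top exterior power of~$M$, the conclusion follows cleanly because $\rank_R M=r<r+1$. Everything else—the reduction to $n=b$, the nonvanishing of $x$, and the annihilator in case~(1)—is routine given Proposition~\ref{prop:tors-elt}, Lemma~\ref{lem:tau-not0}, and the standing hypotheses.
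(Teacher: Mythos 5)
Your argument is correct, and for the reduction to $n=b$, the nonvanishing of $\tau(\ul m)\otimes n_0$ (via Lemma~\ref{lem:tau-not0}), and case~(1) (via Proposition~\ref{prop:tors-elt} applied to every column of a minimal presentation matrix), it coincides with the paper's proof. Where you genuinely diverge is case~(2). The paper picks $m_1,\dots,m_b$ as part of a minimal generating set of $M$ so arranged that $m_1,\dots,m_{b-1}$ form a $\text{Q}(R)$-basis of $\text{Q}(R)\otimes_RM$, extracts from this an explicit syzygy relation $r_1m_1+\cdots+r_bm_b=0$ with $r_b$ a non-zerodivisor, and then quotes Proposition~\ref{prop:tors-elt} to conclude $(\ul r)\tau(\ul m)=0$. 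You instead observe that the shuffle map $\tau$ is alternating (because of the same strict skew-commutativity used to prove Proposition~\ref{prop:tors-elt}), hence factors through $\Lambda^{b}_RM$; since $b=\rank_RM+1$, flat base change gives $\text{Q}(R)\otimes_R\Lambda^{b}_RM=0$, so $\tau(\ul m)$ lies in the torsion submodule of $\tensor_R^{b}M$. Your route sidesteps the somewhat delicate step of choosing a minimal generating set of $M$ whose first $b-1$ members are simultaneously a basis over the semilocal ring $\text{Q}(R)$, and it makes transparent that the mechanism at work is precisely $b$ exceeding the generic rank; the paper's route has the small compensating advantage of producing a concrete non-zerodivisor that annihilates $\tau(\ul m)$ and of reusing Proposition~\ref{prop:tors-elt} verbatim rather than unpacking its proof. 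Your preparatory observation that $\nu_R(M)\ge\rank_RM+1$ for a non-free module of constant rank is also stated (without the justification you supply) in the paper.
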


\begin{proof}
It suffices to prove the statement for $n=b$, since
\[
(\tensor_R^nM)\otimes_{R}N \cong
(\tensor_R^bM)\otimes_{R}((\tensor_R^{n-b}M)\otimes_{R}N)\,,
\]
and $N\ne 0$ implies $(\tensor_R^iM)\otimes_RN\ne 0$ for each $i\geq
0$, by Nakayama's lemma.

(1) Let $m_{1},\dots,m_{b}$ be a minimal generating set for the $R$-module $M$. The element $\tau(\ul m)$ in $\tensor_R^bM$
is annihilated by $I(M)$, by Proposition~\ref{prop:tors-elt}, and is not in $\fm (\tensor_R^bM)$, by Lemma~\ref{lem:tau-not0}.
It follows that, for each $x$ in $N\setminus \fm N$, the element $\tau(\ul m)\otimes x$ in $(\tensor_R^bM)\otimes_{R}N$ is nonzero 
and is annihilated by $I(M)$, and hence is in the torsion submodule; this where the hypothesis that $I(M)$ contains a non-zerodivisor is used.

(2)  We claim that there exists a syzygy relation \eqref{eq:relation} with $\ul m$ a minimal generating set for $M$, $(\ul r)\subseteq\fm$, and
some $r_i$ a non-zerodivisor.

Indeed, $\nu_{R}(M)\ge b$ since $M$ is not free. Choose elements $m_{1},\dots,m_{b}$ that form part of a minimal generating set for $M$ and such that $m_{1},\dots,m_{b-1}$ form a basis for $\text{Q}(R)\otimes_{R}M$ over $\text{Q}(R)$. Then there is a syzygy relation as in \eqref{eq:relation} in which $r_{b}$ is a non-zerodivisor.

The element $\tau(\ul m)$ in $\tensor_R^bM$ is annihilated by $(\ul r)$, by Proposition~\ref{prop:tors-elt}, and is not in $\fm (\tensor_R^bM)$, by Lemma~\ref{lem:tau-not0}. Since $(\ul r)$ has a non-zerodivisor,  it follows as in (1) that the torsion submodule of $(\tensor_R^bM)\otimes_{R}N$ is nonzero.
\end{proof}

We learned recently that in 2011, in response to a query on MathOverflow,
David Speyer gave a proof (quite similar to ours) of part (1) when $R$ is a domain.
(See http://mathoverflow.net/questions/73120/torsion-free-tensor-powers.)

\medskip

One cannot always expect torsion in tensor powers of non-free modules:

\begin{example}
\label{ex:node1}
Let $R=k[[x,y]]/(xy)$, where $k$ is a field. The torsion-free $R$-module $M:=R/(x)$ is not free; however $\tensor_R^nM$ is isomorphic to $R/(x)$ for every $n\ge 1$, and hence is torsion-free.
\end{example}

The preceding results bring to the fore the following:

\begin{question}
\label{quest}
Let $R$ be a local domain. Is there an integer $b$, depending only on
$R$, such that $\tensor^n_RM$ has torsion for every finitely generated
non-free $R$-module $M$ and every integer $n\ge b$?
\end{question}

The condition that $R$ be a domain is to avoid the situation of
Example~\ref{ex:node1}. When $R$ is regular, one can take $b=\dim R$,
by results of Auslander~\cite[Theorem 3.2]{Au} and
Lichtenbaum~\cite[Corollary 3]{Li}.

\section{Torsion ``carriers''}
\label{sec:Frobenius}
Some modules, even though they are torsion-free, usually generate torsion in tensor products.  For example, over a local ring $(R,\fm, k)$ of positive depth, the maximal ideal $\fm$ is such a module:  for any finitely generated non-free $R$-module $M$, the tensor product $\fm\otimes_RM$ has torsion.  To see this, observe that the short exact
sequence
\[
0 \to \fm \to R \to k \to 0
\]
yields an injection from the
torsion module $\Tor_1^R( k,M)$ into $\fm\otimes_RM$; moreover, $\Tor_1^R(k,M) \ne 0$ because $M$ is not free.

We will give two more  examples of torsion carriers:
the integral closure $\ov R$ of a one-dimensional analytically unramified ring $R$, and the  Frobenius powers ${}^{\vf^e}\!R$ of a complete intersection $R$ of characteristic $p$.  Recall that a local ring is {\em analytically unramified} provided its completion is reduced.  If $R$ is one-dimensional, an equivalent condition  is that  $R$ be Cohen-Macaulay with
 finitely generated integral closure $\ov R$, \cite[Theorem 4.6]{LW}.

\begin{theorem}\label{thm:integral-closure} Let $R$ be a one-dimensional analytically unramified
local ring, and let  $\ov R$ be the integral closure of $R$ in its total quotient ring. If $M$ is a finitely generated $R$-module
for which $\ov R\otimes_RM$ is torsion-free, then $M$ is free.
\end{theorem}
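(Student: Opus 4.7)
The plan is to exploit that $\ov R$ is a finite product of semilocal principal ideal domains, which will force $\ov R \otimes_R M$ to be free of rank $\nu_R(M)$, and consequently $M$ itself to be free of the same rank.

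Since $R$ is reduced and analytically unramified, one has $\ov R = \prod_j \ov{R/\fp_j}$ as $\fp_j$ runs over the minimal primes of $R$, and each factor $T_j := \ov{R/\fp_j}$ is a semilocal Dedekind domain (finite over the one-dimensional local domain $R/\fp_j$), hence a PID. The non-zerodivisors of $R$ coincide with those of $\ov R$ that lie in $R$, so $R$-torsion-freeness and $\ov R$-torsion-freeness agree on $\ov R$-modules; by hypothesis $\ov R \otimes_R M$ is $\ov R$-torsion-free, and splitting by the primitive idempotents of $\ov R$ exhibits each component as a finitely generated torsion-free module over the PID $T_j$, hence free. Thus $\ov R \otimes_R M \cong \bigoplus_j T_j^{r_j}$ as $\ov R$-modules.

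The key step is to show that the ranks $r_j$ all agree. For every maximal ideal $\fn$ of $\ov R$ we have $\fn \cap R = \fm$ since $R$ is local, and therefore
\[
(\ov R \otimes_R M) \otimes_{\ov R} \ov R/\fn \;\cong\; (M/\fm M) \otimes_k (\ov R/\fn)
\]
has $\ov R/\fn$-dimension $\nu_R(M)$, independent of $\fn$. So $r_j = \nu_R(M)$ for every $j$, and writing $r = \nu_R(M)$ we obtain $\ov R \otimes_R M \cong \ov R^r$.

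To conclude, tensoring the latter over $\ov R$ with $\mathrm{Q}(R) = \mathrm{Q}(\ov R)$ gives $\mathrm{Q}(R) \otimes_R M \cong \mathrm{Q}(R)^r$, so $M$ has rank $r$. A minimal surjection $R^r \twoheadrightarrow M$ (available since $\nu_R(M) = r$) then has kernel $K$ which is torsion-free (being a submodule of $R^r$) and of rank zero (since the induced surjective endomorphism of the finitely generated free $\mathrm{Q}(R)$-module $\mathrm{Q}(R)^r$ is an isomorphism), hence zero. Therefore $M \cong R^r$. The main obstacle is the rank-constancy step: it relies on the twin facts that $\ov R$ decomposes as a product of PIDs and that every maximal ideal of $\ov R$ contracts to the unique $\fm$ of $R$.
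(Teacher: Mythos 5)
Your proof is correct. You take the same overall strategy as the paper (decompose $\ov R = \prod_i D_i$ into semilocal PIDs via the minimal primes, observe that $\ov R \otimes_R M$ is then a direct sum $\bigoplus_i D_i^{r_i}$, and reduce everything to showing the ranks $r_i$ all equal $\nu_R(M)$), but the key rank computation is done by a genuinely different technique. The paper counts minimal generators over $R$: setting $e_i = \nu_R D_i$ and $n = \nu_R M$, Nakayama gives $\nu_R(\ov R \otimes_R M) = (\sum_i e_i)\,n$, while the decomposition gives $\sum_i r_i e_i$; combined with the inequality $r_i \le n$ (coming from the surjection $R^n \onto M$), this forces $r_i = n$ for all $i$. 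You instead tensor with $\ov R / \fn$ for a maximal ideal $\fn$ of $\ov R$ and use lying-over ($\fn \cap R = \fm$ since $\ov R$ is integral over the local ring $R$) to get $r_i = \dim_k(M/\fm M) = n$ directly, with no inequality needed. Your route is slightly cleaner on this point. One small remark: your claim that ``$R$-torsion-freeness and $\ov R$-torsion-freeness agree on $\ov R$-modules'' is correct, but the cleanest justification is that $Q(R) = Q(\ov R)$ and $Q(R)\otimes_R X \cong Q(R)\otimes_{\ov R} X$ for any $\ov R$-module $X$ (because $Q(R)\otimes_R \ov R = Q(R)$); your phrasing about non-zerodivisors of $\ov R$ lying in $R$ is a bit off as stated, though the conclusion you draw is right. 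The paper leaves this same point implicit.
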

\begin{proof}  Let $\fp_1,\dots, \fp_s$ be the minimal
prime ideals of $R$, and for each $i$ let $r_i$ be the dimension of the $R_{\fp_i}$-vector space
$M_{\fp_i}$.   Put $n = \nu_R M$, the minimal number of generators of the $R$-module $M$, and choose an
exact sequence
\[
0 \to K \to R^{(n)} \to M \to 0\,.
\]
If we can show that $r_i = n$ for each $i$, we'll know that $K$ is torsion and hence zero, and we'll be done.

Put $D_i = \overline{R/\fp_i}$, the integral closure of the domain
$R/\fp_i$.  Since $R$ is reduced, we have inclusions
\[
R \hookrightarrow \prod_{i=1}^sR/\fp_i \hookrightarrow
\prod_{i=1}^sD_i \hookrightarrow
\prod_{i=1}^s\text{Q}(R/\fp_i) = \text{Q}(R)\,.
\]
We see that
$\ov R = \prod_{i=1}^sD_i$\,; moreover,
each $D_i$ is a semilocal Dedekind domain and therefore a principal ideal domain.
Since $\ov R\otimes_RM$ is torsion-free, it is projective,
in fact free of rank $r_i$ on the component $D_i$.  Therefore, setting $e_i = \nu_RD_i$,  we have the equations
\[
  r_1e_1+\dots + r_se_s = \nu_R(\ov R\otimes_RM) = (\nu_R\ov R)\cdot (\nu_RM) = (e_1 + \dots + e_s)n.
  \]
Since $r_i\le n$ for each $i$, it follows from these equations that $r_i = n$ for each $i$.
\end{proof}

Let $R$ be a Noetherian ring of positive characteristic $p$ and
${\vf}\colon R\lra R$ the Frobenius endomorphism $r\mapsto r^p$.
Given an $R$-module $M$ and a positive
integer $e$, we write ${}^{\vf^e}\!M$ for the $R$-module obtained
from $M$ by restriction of scalars along $\vf^e$; thus $r\cdot m =
r^{p^{e}}m$ for $r\in R$ and $m\in M$. Observe that $M$ is
torsion-free if and only if ${}^{\vf^e}\!M$ is torsion-free for some
(equivalently, all) $e\ge1$.  Following \cite{PS}, we write $F^e(M)$ for the tensor
product $M\otimes_R{}^{\vf^e}\!R$.  One views $F^e(M)$ as a {\em right} $R$-module:
the action of $R$ on $F^e(M)$ comes from the right (ordinary)
action of $R$ on ${}^{\vf^e}\!R$.  Thus $F^e(R) \cong R$ as $R$-modules,
and it follows that $F^e(M)$ is finitely generated if $M$ is finitely generated.

The following result follows immediately from \cite[Corollary 1.10]{PS}:

\begin{theorem}[Peskine and Szpiro, 1973]
\label{rmk:PS} 
Let $R$ be a local ring of  characteristic $p$, and let $M$ be a finitely generated $R$-module.  If $M$ has finite projective dimension, then $\Tor_i^R(M, {}^{\vf^e}\!R) = 0$ for all $e\ge1$ and all $i\ge 1$.
\end{theorem}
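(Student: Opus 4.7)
The plan is to derive the vanishing directly from Peskine and Szpiro's acyclicity theorem for Frobenius twists of finite free resolutions. Since $M$ has finite projective dimension over the local ring $R$, choose a bounded resolution
\[
F_\bullet\colon \quad 0 \lra F_n \lra F_{n-1} \lra \cdots \lra F_1 \lra F_0 \lra 0
\]
by finitely generated free $R$-modules, where $n=\pd_R M$. The complex $F_\bullet \tensor_R {}^{\vf^e}\!R$ computes $\Tor^R_*(M, {}^{\vf^e}\!R)$, so the desired conclusion reduces to the claim that this complex is acyclic in positive degrees.

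This claim is exactly the content of \cite[Corollary~1.10]{PS}, which asserts that over a Noetherian ring of prime characteristic $p$, the Frobenius functor $F^e = -\tensor_R {}^{\vf^e}\!R$ sends a finite free resolution of a finitely generated module of finite projective dimension to a resolution. Applying that corollary to $F_\bullet$ yields the vanishing of all higher Tor's at once. Since the result is advertised as an immediate corollary, the proof in the text would consist essentially of this citation.

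For context, the mechanism behind \cite[Corollary~1.10]{PS} is the following. Fix bases and represent each differential $\partial_i$ of $F_\bullet$ by a matrix; after tensoring with ${}^{\vf^e}\!R$ the differentials are represented by the matrices $\partial_i^{[p^e]}$ obtained by raising every entry to the $p^e$-th power. By the Buchsbaum--Eisenbud acyclicity criterion, acyclicity of the twisted complex reduces to rank equalities and depth inequalities on these matrices. The rank conditions are immediate, since ranks of free modules are preserved under Frobenius, and the Fitting ideals $I_{r_i}(\partial_i^{[p^e]})$ have the same radical as $I_{r_i}(\partial_i)$. The real obstacle, and the technical heart of \cite{PS}, is the preservation of the depth inequalities $\depth I_{r_i}(\partial_i^{[p^e]}) \ge i$; depth need not behave well under Frobenius powers of ideals, and this is handled in \cite{PS} by exploiting the depth sensitivity already available from the acyclicity of the untwisted complex $F_\bullet$.
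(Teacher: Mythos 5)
Your main argument is correct and is precisely the paper's proof: the statement is established by citing \cite[Corollary 1.10]{PS}, and the only content to supply is that tensoring a bounded finite free resolution of $M$ with ${}^{\vf^e}\!R$ computes $\Tor^R_*(M,{}^{\vf^e}\!R)$, which you do.

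One correction to the background paragraph you appended: you assert that preserving the depth inequalities is the ``real obstacle'' because ``depth need not behave well under Frobenius powers of ideals,'' but in fact it does. The grade of an ideal depends only on its radical, and $I^{[p^e]}$ always has the same radical as $I$ (since $I^{[p^e]}\subseteq I$ while $a^{p^e}\in I^{[p^e]}$ for every $a\in I$). Moreover, in characteristic $p$ the $r\times r$ minors of $\partial^{[p^e]}$ are exactly the $p^e$-th powers of the corresponding minors of $\partial$ (freshman's dream applied to the Leibniz expansion), so $I_{r}(\partial^{[p^e]})$ and $I_{r}(\partial)$ have the same radical and hence the same grade; the rank conditions transfer for the same reason, since $\grade I_{r_i}(\partial_i)\geq 1$ guarantees $I_{r_i}(\partial_i^{[p^e]})\neq 0$, while $I_{r_i+1}(\partial_i)=0$ forces $I_{r_i+1}(\partial_i^{[p^e]})=0$. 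So, once the Buchsbaum--Eisenbud criterion is in hand, acyclicity of the Frobenius twist is essentially automatic; there is no special depth argument to be rescued by depth sensitivity of the untwisted complex.
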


The converse of Theorem~\ref{rmk:PS}  is  true and was proved by Herzog \cite[Theorem 3.1]{Her}.
For complete intersections, the following strong converse was proved by Avramov and Miller \cite[Theorem]{AM}:

\begin{theorem}[Avramov and Miller, 2001]\label{rmk:AM} Let $(R,\fm)$ be a complete intersection
of characteristic $p$, and let $M$ be a finitely generated $R$-module.
If $\Tor_i^R(M, {}^{\vf^e}\!R) = 0$ for some $e\ge1$ and some $i\ge 1$, then $M$ has finite projective dimension.
\end{theorem}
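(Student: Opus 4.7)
My plan proceeds in two stages: first, show that a single vanishing $\Tor_i^R(M,{}^{\vf^e}\!R)=0$ propagates to vanishing of $\Tor_j^R(M,{}^{\vf^e}\!R)$ for all $j\gg 0$, and then invoke the Herzog converse of Theorem~\ref{rmk:PS} (already cited just above) to conclude $\pd_R M<\infty$. The second stage is quoted; the first is the genuinely new content. After reducing to the complete case by faithful flatness of $\fm$-adic completion, I write $R=Q/(\ul f)$ with $Q$ a regular local ring of characteristic $p$ and $\ul f=f_1,\dots,f_c$ a regular sequence in $\fm_Q^2$.

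The principal tool will be the Gulliksen--Eisenbud theory of cohomological operators attached to the presentation $Q\to R$: central degree-$2$ chain operators $\chi_1,\dots,\chi_c$ make $\Ext_R^*(M,N)$ a finitely generated module over the polynomial ring $R[\chi_1,\dots,\chi_c]$ for any finitely generated $M,N$. Two standard consequences, due to Avramov and collaborators, about the associated support variety $V_R(M)\subseteq \mathbb{A}^c_k$ are crucial: (i) $V_R(M)=\{0\}$ if and only if $\pd_R M<\infty$; and (ii) $V_R(M)\cap V_R(N)=\{0\}$ if and only if $\Tor_i^R(M,N)=0$ for $i\gg 0$. The computation specific to our setting is to show that $V_R({}^{\vf^e}\!R)=\mathbb{A}^c_k$: tracing a Shamash-style free resolution of ${}^{\vf^e}\!R$ through the Frobenius restriction of scalars, the $\chi_i$-action on $\Ext_R^*({}^{\vf^e}\!R,k)$ is identified with multiplication by classes arising from the Frobenius twists $f_i^{p^e}$, which remain linearly independent, so the support variety fills the entire affine space.

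With $V_R({}^{\vf^e}\!R)=\mathbb{A}^c_k$ in hand, fact (ii) combined with a rigidity theorem for $\Tor$ over complete intersections upgrades the hypothesized single vanishing $\Tor_i^R(M,{}^{\vf^e}\!R)=0$ into eventual vanishing; then $V_R(M)=V_R(M)\cap\mathbb{A}^c_k=\{0\}$, and fact (i) yields $\pd_R M<\infty$. The hardest step, in my view, is the explicit identification of the $\chi_i$-action on $\Ext_R^*({}^{\vf^e}\!R,k)$ (which genuinely uses the Frobenius module structure and is not formal), together with justifying the passage from a single vanishing $\Tor$ to eventual vanishing; this requires a rigidity statement tailored to ${}^{\vf^e}\!R$ rather than a generic one, and presumably exploits that the $\Ext^*_R({}^{\vf^e}\!R,k)$-module structure forces the Hilbert series of $\Tor^R_\bullet(M,{}^{\vf^e}\!R)$ to satisfy strong growth restrictions incompatible with an isolated zero.
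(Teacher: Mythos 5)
The paper does not prove Theorem~\ref{rmk:AM}; it is cited from Avramov and Miller~\cite{AM}, so there is no in-text proof against which to match your argument.  With that said, your sketch contains gaps serious enough that it does not yet constitute a proof.

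The first issue is foundational: the support-variety machinery you want to invoke (facts (i) and (ii), finite generation of $\Ext_R^*(M,N)$ over $R[\chi_1,\dots,\chi_c]$, etc.) is developed for \emph{finitely generated} $R$-modules, but ${}^{\vf^e}\!R$ need not be finitely generated --- indeed the surrounding text of this very paper flags exactly this point as a complication that Auslander's original argument did not have to face.  Before any of your framework can be brought to bear you would need to either reduce to the F-finite case (and say how) or extend the support-variety dictionary to the module ${}^{\vf^e}\!R$, and neither step is routine.  Relatedly, the asserted identification of the $\chi_i$-action on $\Ext_R^*({}^{\vf^e}\!R,k)$ with ``multiplication by classes arising from the Frobenius twists $f_i^{p^e}$'' is not a computation, and it is unclear what it would even mean for the ring elements $f_i^{p^e}$ to be ``linearly independent'' in a way that forces $V_R({}^{\vf^e}\!R)=\mathbb{A}^c_k$.

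The second and more fundamental issue is the step you yourself flag as the hardest: upgrading a \emph{single} vanishing $\Tor_i^R(M,{}^{\vf^e}\!R)=0$ to \emph{eventual} vanishing.  Support varieties only detect eventual vanishing, so this upgrade is exactly where all the real content of the Avramov--Miller theorem lives; the theorem is, in essence, a rigidity statement for $\Tor(\,\cdot\,,{}^{\vf^e}\!R)$ over a complete intersection.  Your proposal gestures at ``a rigidity statement tailored to ${}^{\vf^e}\!R$'' and a hoped-for growth restriction on the Hilbert series of $\Tor^R_\bullet(M,{}^{\vf^e}\!R)$, but supplies no argument.  Without that, the reduction to support varieties is circular: you would be proving the theorem by assuming a consequence of it.  For the record, Avramov and Miller's actual argument does not run through a support-variety computation of this kind; it exploits the presentation $R=Q/(\ul f)$ and the interplay between Frobenius over the regular ring $Q$ (where Peskine--Szpiro/Herzog applies cleanly) and the Eisenbud operators over $R$, via a more delicate comparison than the one you outline.
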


The proof that (1)$\implies$(2) in the next theorem follows many of the same steps Auslander used in his proof of
 \cite[Lemma 3.1]{Au}.  The main differences are that we have to allow for the possibility that $^{\vf^e}\!R$ is not finitely generated, and that we appeal to Theorems~\ref{rmk:PS} and \ref{rmk:AM} for a replacement of  rigidity of Tor over regular local rings.  Recall that a module $M$ is {\em generically free} provided $M_\fp$ is a free $R_\fp$-module for each $\fp\in \Ass R$.

\begin{theorem}\label{thm:carrier}
Let $(R,\fm)$ be a complete intersection of characteristic $p$ and
$M$ a finitely generated, generically free $R$-module. Fix a positive integer $e$.
The following  conditions are
equivalent:
\begin{enumerate}[\quad\rm(1)]
\item $F^e(M)$ is torsion-free
\item $M$ is torsion-free and of finite projective dimension.
\end{enumerate}
\end{theorem}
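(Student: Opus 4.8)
### Proof proposal

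The plan is to prove both implications by reduction to the total quotient ring together with an analysis of $\Tor$ against the Frobenius. For the direction (2)$\implies$(1), assume $M$ is torsion-free of finite projective dimension. Then by Theorem~\ref{rmk:PS} we have $\Tor_i^R(M,{}^{\vf^e}\!R)=0$ for all $i\ge1$, so $F^e(M)=M\otimes_R{}^{\vf^e}\!R$ behaves homologically like a ``flat base change''. I would compute $\Ass F^e(M)$: since $\Tor_{>0}$ vanishes, for each $\fp\in\Ass R$ the localization $(F^e(M))_\fp$ is $M_\fp\otimes_{R_\fp}{}^{\vf^e}(R_\fp)$, and as $M$ is torsion-free and generically free, $M_\fp$ is free over $R_\fp$, whence $(F^e(M))_\fp$ is free over $R_\fp$. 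To see that no embedded or non-minimal primes of $R$ sneak into $\Ass F^e(M)$, I would use a short free resolution argument near the associated primes (exactly as in the proof of Theorem~\ref{thm:annihilator}): finite projective dimension plus Tor-independence forces $\depth_{R_\fq}(F^e(M))_\fq = \depth R_\fq$ for $\fq\in\Ass(F^e(M))$, so $\fq\in\Ass R$. Hence $\Ass F^e(M)\subseteq\Ass R$, and by the criterion in \ref{tor-sub}, $F^e(M)$ is torsion-free.

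For the harder direction (1)$\implies$(2), I would follow the skeleton of Auslander's argument for \cite[Lemma 3.1]{Au}. First, torsion-freeness of $M$ is immediate: $M$ embeds into $F^e(M)$ up to the torsion issues — more precisely, since ${}^{\vf^e}\!R$ is faithfully flat over the subring $\vf^e(R)\cong R$ in the relevant generic sense, $M$ torsion would produce torsion in $F^e(M)$; alternatively localize at each $\fp\in\Ass R$, where $M_\fp$ is free by the generically-free hypothesis, and track the torsion submodule. The real content is showing $\pd_R M<\infty$. Here is where Theorems~\ref{rmk:PS} and \ref{rmk:AM} substitute for Auslander's use of rigidity of $\Tor$ over regular rings: it suffices to produce a single $i\ge1$ with $\Tor_i^R(M,{}^{\vf^e}\!R)=0$, and then Avramov--Miller (Theorem~\ref{rmk:AM}) delivers finite projective dimension. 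To get the vanishing, I would assume for contradiction that $\Tor_i^R(M,{}^{\vf^e}\!R)\ne0$ for all $i\ge1$, pick the first such $i_0$ — actually argue at $i_0=1$ — and exhibit a nonzero torsion element of $F^e(M)$ built from a nonzero element of $\Tor_1^R(M,{}^{\vf^e}\!R)$. The mechanism: take a presentation $R^\mu\xra{\theta}R^\nu\to M\to0$; tensoring with ${}^{\vf^e}\!R$, the element $\tau(\ul u)$-type cycles in $\Tor_1$ inject into the ``syzygy part'' of $F^e(M)$, and the relation $\sum r_i m_i=0$ lifted through Frobenius (so $\sum r_i^{p^e}(\text{image})=0$) lets me apply Proposition~\ref{prop:tors-elt} to produce an element annihilated by the ideal generated by those $r_i^{p^e}$; genericfreeness guarantees one can choose the relation so that this ideal contains a non-zerodivisor, exactly as in the proof of Theorem~\ref{thm:tors-in-power}(2).

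The main obstacle, and the step I expect to cost the most care, is the bookkeeping around ${}^{\vf^e}\!R$ \emph{not being finitely generated} and not being flat: the identity $F^e(M)_\fp\cong M_\fp\otimes{}^{\vf^e}(R_\fp)$ and the vanishing of higher $\Tor$ after localization both need justification since Frobenius does not commute with localization on the nose (though ${}^{\vf^e}(R_\fp)$ is the localization of ${}^{\vf^e}\!R$ at the multiplicative set $R\setminus\fp$, which lets the argument through). I would also need to be careful that the torsion element produced in the (1)$\implies$(2) direction is genuinely nonzero in $F^e(M)$ and not killed in passing from the presentation complex to homology — this is where Lemma~\ref{lem:tau-not0} (the element is outside $\fm F^e(M)$) is the right tool, combined with right-exactness. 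Everything else is a matter of assembling \eqref{eq:depth-Tor}, the $\Ass$-criterion of \ref{tor-sub}, and the two cited characteristic-$p$ theorems in the correct order.
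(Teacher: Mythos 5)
Your (2)$\implies$(1) argument takes a genuinely different route from the paper and, with care about Frobenius commuting with localization and the local Peskine--Szpiro equality $\pd_{R_\fq}F^e(M)_\fq=\pd_{R_\fq}M_\fq$, can be made to close: Auslander--Buchsbaum gives $\Ass F^e(M)\subseteq\Ass M\subseteq\bigcup\Ass R$. The paper's version is shorter: from the universal pushforward $0\to M\to R^{(\nu)}\to N\to 0$ one has $\pd_R N<\infty$, so Theorem~\ref{rmk:PS} kills $\Tor_{\geq 1}^R(N,{}^{\vf^e}\!R)$, which produces an injection $F^e(M)\hookrightarrow({}^{\vf^e}\!R)^{(\nu)}\cong R^{(\nu)}$ directly, with no $\Ass$-bookkeeping and no localization of Frobenius needed. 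Your version works but pays in overhead; the paper's buys torsion-freeness outright from the embedding into a free module.

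The direction (1)$\implies$(2) as you sketch it has two real gaps. First, torsion-freeness of $M$ is \emph{not} immediate. Generic freeness of $M$ says nothing about $\tp R M$ (take $M=R\oplus R/\fm$ with $\depth R\ge 1$: $M_\fp$ is free at every associated prime, yet $\tp RM\neq 0$), so ``localize at $\Ass R$ and track the torsion submodule'' does not yield $\tp RM=0$. The paper must work for it: first deduce that $F^e(\tf RM)$ is torsion-free, prove $\tf RM$ has finite projective dimension and vanishing Tors against ${}^{\vf^e}\!R$, then use that Tor-vanishing to split off an injection $F^e(\tp RM)\hookrightarrow F^e(M)$, conclude $F^e(\tp RM)=0$, and finally rule out $\tp RM\neq 0$ via a surjection $\tp RM\twoheadrightarrow R/\fm$ and the contradiction $\fm\,{}^{\vf^e}\!R={}^{\vf^e}\!R$.

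Second, and more seriously, the mechanism you propose to obtain Tor-vanishing is not correct. From a presentation $R^\mu\xra{\theta}R^\nu\to M\to 0$, the module $\Tor_1^R(M,{}^{\vf^e}\!R)$ sits as a subquotient of $({}^{\vf^e}\!R)^\mu$, not inside $F^e(M)=\coker(F^e(\theta))$, so there is no map sending a cycle in $\Tor_1$ to an element of $F^e(M)$; the $\tau(\ul u)$ device and Proposition~\ref{prop:tors-elt}, which produce torsion in high tensor \emph{powers}, do not apply to $F^e(M)$ either. The missing idea is the universal pushforward on $\tf RM$: from $0\to\tf RM\to R^{(m)}\to N\to 0$ one gets an honest injection $\Tor_1^R(N,{}^{\vf^e}\!R)\hookrightarrow F^e(\tf RM)$ because the next term $F^e(R^{(m)})$ has no $\Tor_1$; genericity of $N$ makes that Tor torsion, so it vanishes, and then Theorems~\ref{rmk:PS} and \ref{rmk:AM} take over. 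Orienting the short exact sequence as a pushforward (so that $M$ is the submodule, not the quotient) is exactly what turns $\Tor_1$ into a submodule of the tensor product; without it the argument does not start.
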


\begin{proof}

Suppose (1) holds, and apply  $-\otimes_R{}^{\vf^e}\!R$ to the short exact sequence \eqref{eq:tp},
getting an exact sequence

\[
 F^e(\tp RM)\overset {\alpha} {\lra}  F^e(M)
\overset {\beta} {\lra} F^e(\tf RM) \lra 0\,.
\]
Since $F^e(\tp RM)$ is torsion and  $F^e(M)$ is torsion-free, we see that $\alpha = 0$, whence $\beta $ is an isomorphism.  In particular $ F^e(\tf RM)$ is torsion-free.  Next, consider the universal pushforward \cite[\S1]{HJW}:
\begin{equation}\label{eq:push}
0\lra \tf RM \lra R^{(m)} \lra N \lra 0\,.
\end{equation}
Applying $-\otimes_R {}^{\vf^e}\!R$ to this sequence, we obtain an injection

\[
\Tor_1^R(N,{}^{\vf^e}\!R) \hookrightarrow F^e(\tf RM)\,.
\]
Now $\tf RM$ is clearly generically free, and from the construction of the universal pushforward \cite[\S1]{HJW}
one checks that $N$ is generically free as well.  It follows that $\Tor_1^R(N, {}^{\vf^e}\!R)$ is torsion.
Since $F^e(\tf M)$ is torsion-free, we have  $\Tor_1^R(N,{}^{\vf^e}\!R) = 0$\,.   Now we invoke
Theorems~\ref{rmk:PS} and \ref{rmk:AM} to see that
\begin{equation*}
\Tor_i^R(N,{}^{\vf^e}\!R) = 0 \text{ for all } i\ge1\,,
\end{equation*}
and, moreover, that $N$ has finite projective dimension.   From \eqref{eq:push} it follows that $\Tor_i^R(\tf RM, {}^{\vf^e}\!R) = 0$ for all $i\ge 1$ and that $\tf RM$ has finite projective dimension.  Therefore we will have (2) once we show that $\tp RM = 0$.  For this, we apply $-\otimes_R{}^{\vf^e}\!R$ once again to \eqref{eq:tp}, to get an injection
\[
F^e(\tp RM)  \hookrightarrow F^e(M)\,.
\]
Since $F^e(\tp RM)$ is torsion and $F^e(M)$ is torsion-free, we have
$F^e(\tp RM)  = 0$.  If $\tp RM$ were non-zero, there would be a surjection $\tp RM \twoheadrightarrow R/\fm$.  But then  $F^e(R/\fm) =0$, that is, $\fm {{}^{\vf^e}\!R} = {}^{\vf^e}\!R$, an obvious contradiction, since $\fm {{}^{\vf^e}\!R} \subseteq \fm$.  Thus $\tp RM = 0$, and the proof that (1) $\implies$ (2) is complete.

Now assume (2) holds.
 Since $M$ is torsion-free, we can build the
universal pushforward \cite[\S1]{HJW}:
\begin{equation*}
0\lra M \lra R^{(\nu)} \lra N \lra 0\,,
\end{equation*}
where $\nu = \nu_R M^*$.
Then $N$ has finite projective dimension.
Now Theorem~\ref{rmk:PS} implies that
$\Tor_i^R(N, {}^{\vf^e}\!R) = 0$ for  all $i\ge1$.  Therefore
$\Tor_1^R(M, {}^{\vf^e}\!R) = 0$, and we get an injection
$F^e(M) \hookrightarrow ({}^{\vf^e}\!R)^{(\nu)}$, whence
$F^e(M)$ is torsion-free.  \end{proof}

From Theorem~\ref{rmk:PS} (alternatively, from the proof of Theorem~\ref{thm:carrier}),
we get  Tor-independence (item \eqref{pg:tor independent} in the introduction):

\begin{corollary}\label{cor:tor-van} If $R$ and $M$ satisfy the equivalent conditions of Theorem~\ref{thm:carrier}, then
$\Tor^R_i(M,{}^{\vf^{e'}}\!R) = 0$ for every $i\ge 1$ and every $e'\ge1$.
\end{corollary}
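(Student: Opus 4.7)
The plan is simply to invoke Theorem~\ref{rmk:PS}. If $R$ and $M$ satisfy the equivalent conditions of Theorem~\ref{thm:carrier}, then in particular condition (2) holds: $M$ is a finitely generated module of finite projective dimension over the complete intersection (hence local) ring $R$ of characteristic $p$. Feeding $M$ directly into the Peskine--Szpiro vanishing theorem gives
\[
\Tor_i^R(M,\,{}^{\vf^{e'}}\!R) = 0 \quad\text{for every } i\ge 1 \text{ and every } e'\ge 1,
\]
which is exactly the conclusion of the corollary.

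Alternatively, as the parenthetical remark in the statement suggests, one can extract the vanishing from the proof of the implication (2)$\implies$(1) in Theorem~\ref{thm:carrier} itself. In that argument the universal pushforward produces an exact sequence
\[
0 \lra M \lra R^{(\nu)} \lra N \lra 0
\]
in which $N$ has finite projective dimension; applying Theorem~\ref{rmk:PS} to $N$ yields $\Tor_i^R(N,\,{}^{\vf^{e'}}\!R)=0$ for all $i\ge 1$, and the long exact sequence of Tor then shifts this vanishing onto $M$ (using that $R^{(\nu)}$ is free). This second route is essentially a repackaging of the first, since it relies on the same Peskine--Szpiro input.

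There is no genuine obstacle: the corollary is a formal consequence of the Peskine--Szpiro theorem once we know that $M$ has finite projective dimension. The only small point to observe is that the conclusion is stated for every $e'\ge 1$, not merely for the single $e$ fixed in the statement of Theorem~\ref{thm:carrier}; this is harmless because condition (2) is independent of $e$, and Theorem~\ref{rmk:PS} imposes no restriction on the Frobenius exponent. Consequently the write-up can be a one- or two-line proof citing Theorems~\ref{rmk:PS} and \ref{thm:carrier}.
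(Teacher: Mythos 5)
Your proposal matches the paper exactly: the paper derives the corollary by citing Theorem~\ref{rmk:PS} directly (noting that condition~(2) of Theorem~\ref{thm:carrier} supplies the needed finite projective dimension), with the proof of Theorem~\ref{thm:carrier} offered as an alternative route. Both of your observations --- the direct appeal to Peskine--Szpiro and the independence of $e'$ --- are precisely the intended argument.
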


Of course, if $M$ is torsion-free, the converse of Corollary~\ref{cor:tor-van} holds, by Theorem~\ref{rmk:AM}.
In fact, it suffices to check that
$\Tor^R_i(M,{}^{\vf^{e'}}\!R) = 0$ for a single $e'$ and a single $i$.

%%  \item $M$ is torsion-free, and $\Tor^R_i(M,{}^{\vf^e}\!R) = 0$
%%\begin{enumerate}[\quad\rm(a)]
%%  \item for some $i\ge 1$ and some $e\ge 1$.
%%  \item for every $i\ge 1$ and every $e\ge 1$.
%%  \end{enumerate}

%% From Remarks~\ref{rmk:PS} and \ref{rmk:AM}, we see that (2a), (2b), and (3) are equivalent.  Therefore
%% It will suffice to prove that
%% (1a) $\implies$ (3) and that (3)  $\implies$ (1b).

\medskip

Recall that $R$ is
\emph{F-finite} provided ${\vf}$ is a finite map, that is, $R$ is
module-finite over ${\vf}(R)$.  In this case, $\vf^e$ is a finite map for each $e\ge1$.  Note
that the action of $R$ on the module $({}^{\vf^e}\!M)$ in items (1) and (2) below is
the {\em Frobenius} action $m \cdot r = m r^{p^e}$.

\begin{corollary}\label{cor:F-finite}
Assume that $(R,\fm)$ is a reduced local ring, is $F$-finite, and is a complete intersection. The
following conditions are equivalent:
\begin{enumerate}[\quad\rm(1)]
\item
$F^e({}^{\vf^{e'}}\!M)$ is torsion-free for every
  torsion-free $R$-module $M$ and every pair $e, e'$ of positive integers.
\item
$F^e({}^{\vf^{e'}}\!M)$ is torsion-free for some
  nonzero finitely generated $R$-module $M$ and some pair $e,e'$ of positive integers.
\item
$R$ is regular.
\end{enumerate}
\end{corollary}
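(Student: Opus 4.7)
The plan is to establish the cyclic chain $(3)\Rightarrow(1)\Rightarrow(2)\Rightarrow(3)$.

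For $(3)\Rightarrow(1)$, I would invoke Kunz's theorem: if $R$ is regular, the Frobenius endomorphism is flat and every finitely generated $R$-module has finite projective dimension. Let $M$ be torsion-free and set $N={}^{\vf^{e'}}\!M$. Then $N$ is finitely generated by $F$-finiteness; $N$ is torsion-free because in the reduced ring $R$ an element $r$ is a non-zerodivisor iff $r^{p^{e'}}$ is, so $M$ and $N$ have the same torsion submodule; $N$ is generically free since $R_\fp$ is a field at every minimal prime $\fp$; and $N$ has finite projective dimension by regularity. The direction $(2)\Rightarrow(1)$ of Theorem~\ref{thm:carrier}, applied to $N$, then yields that $F^e(N)$ is torsion-free. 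For $(1)\Rightarrow(2)$, take $M=R$.

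The substantive direction is $(2)\Rightarrow(3)$. Setting $N={}^{\vf^{e'}}\!M$, the same observations show $N$ is finitely generated and generically free. The direction $(1)\Rightarrow(2)$ of Theorem~\ref{thm:carrier} then gives that $N$ is torsion-free and has finite projective dimension over $R$. By Kunz's theorem, to conclude $R$ is regular it suffices to deduce $\pd_R({}^{\vf^{e'}}\!R)<\infty$.

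This last inference---transferring $\pd_R({}^{\vf^{e'}}\!M)<\infty$ to $\pd_R({}^{\vf^{e'}}\!R)<\infty$ using only that $M\ne 0$---is the main obstacle I expect. My plan is to apply the exact restriction-of-scalars functor ${}^{\vf^{e'}}(-)$ to a minimal free resolution $F_\bullet\to M$, producing a (possibly infinite) resolution ${}^{\vf^{e'}}\!F_\bullet \to N$ of $N$ whose terms are powers of ${}^{\vf^{e'}}\!R$. Because $M\ne 0$, the rank $b_0=\nu_R(M)\ge 1$, so the terms of this resolution are nonzero. Comparing this resolution with a finite free resolution of $N$ furnished by $\pd_R(N)<\infty$, and applying iterated Schanuel-type arguments (or a change-of-rings spectral sequence adapted to $\vf^{e'}$), I would extract finiteness of $\pd_R({}^{\vf^{e'}}\!R)$. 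Kunz's theorem then completes the proof.
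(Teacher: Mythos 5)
Your handling of $(3)\Rightarrow(1)$ and $(1)\Rightarrow(2)$ is fine (the paper gets $(3)\Rightarrow(1)$ more directly from Kunz's flatness theorem, but routing through Theorem~\ref{thm:carrier} also works). The problem is in $(2)\Rightarrow(3)$. Up to the point where you apply Theorem~\ref{thm:carrier} to conclude $\pd_R({}^{\vf^{e'}}\!M)<\infty$, you are aligned with the paper. After that there is a genuine gap.

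The paper closes the argument by citing \cite[Theorem~1.1]{AHIY}: if some nonzero finitely generated $R$-module becomes a module of finite flat (equivalently, here, projective) dimension after restriction of scalars along a power of a contracting endomorphism such as $\vf$, then $R$ is regular. That is precisely the implication ``$\pd_R({}^{\vf^{e'}}\!M)<\infty$ for some $M\ne 0$ $\Rightarrow$ $R$ regular'' that you need, and it is a substantial theorem in its own right. Your plan to re-derive it by first transferring finiteness of $\pd$ from ${}^{\vf^{e'}}\!M$ to ${}^{\vf^{e'}}\!R$ via ``iterated Schanuel-type arguments'' does not go through: the resolution ${}^{\vf^{e'}}\!F_\bullet\to {}^{\vf^{e'}}\!M$ you build has terms that are powers of ${}^{\vf^{e'}}\!R$, which are \emph{not} projective $R$-modules (unless $R$ is already regular), so Schanuel's lemma and its generalizations do not apply to compare it against a finite free resolution of ${}^{\vf^{e'}}\!M$. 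There is no elementary bypass here; this reduction is essentially the hard content of \cite{AHIY}.

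Two smaller points. First, even if you had $\pd_R({}^{\vf^{e'}}\!R)<\infty$, ``Kunz's theorem'' alone would not finish: Kunz characterizes regularity by \emph{flatness} of ${}^{\vf^{e'}}\!R$, not by finite projective dimension. You would additionally need to observe that ${}^{\vf^{e'}}\!R$ is maximal Cohen--Macaulay (since $R$ is Cohen--Macaulay and $\vf$ is a homeomorphism on $\Spec$) and invoke the Auslander--Buchsbaum formula to deduce that a finite projective dimension is in fact zero, hence ${}^{\vf^{e'}}\!R$ is free, hence flat. Second, the observation that ${}^{\vf^{e'}}\!M$ is generically free because $R$ is reduced is correct and is used in the paper as well, so that part of your setup is in order.
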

\begin{proof}
Obviously (1)$\implies$(2), and the implication
(3)$\implies$(1) holds by Kunz's theorem~\cite[Theorem 2.1]{Ku} that
the $R$-module ${}^{\vf^{e}}{R}$ is flat when $R$ is regular.

To prove that (2)$\implies$(3), we note that $^{\vf^{e'}}\!M$  is a finitely generated $R$-module,
by F-finiteness.  Also, $^{\vf^{e'}}\!M$ is generically free because $R$ is reduced.  By Theorem~\ref{thm:carrier},
$^{\vf^{e'}}\!M$ has finite projective dimension, and now \cite[Theorem~1.1]{AHIY} implies  that $R$ is regular.
\end{proof}

\section*{Acknowledgments}
We would like to thank the referee for a critical reading of the paper.

\bibliographystyle{plain}

\end{document}